\documentclass[11pt]{amsart}

\usepackage[T1]{fontenc}
\usepackage{lmodern}
\usepackage{microtype}
\usepackage{amsmath,amssymb,mathtools}
\usepackage{enumitem}
\usepackage{hyperref}
\hypersetup{
  colorlinks=true,
  linkcolor=blue,
  citecolor=blue,
  urlcolor=blue
}

\newcommand{\Q}{\mathbb{Q}}
\newcommand{\Z}{\mathbb{Z}}
\newcommand{\Pui}{\mathcal{P}}
\newcommand{\TrZ}{\operatorname{Tr}_{\Z}}
\newcommand{\den}{\operatorname{den}}
\newcommand{\lcm}{\operatorname{lcm}}
\newcommand{\T}{\mathcal{T}}
\newcommand{\qbinom}[2]{\genfrac{[}{]}{0pt}{}{#1}{#2}_{q}}
\newcommand{\coeff}[2]{[q^{#1}]#2}
\newcommand{\geqcoef}{\succeq_q}

\theoremstyle{plain}
\newtheorem{theorem}{Theorem}[section]
\newtheorem{corollary}[theorem]{Corollary}
\newtheorem{proposition}[theorem]{Proposition}
\newtheorem{lemma}[theorem]{Lemma}
\newtheorem{conjecture}[theorem]{Conjecture}

\theoremstyle{definition}

\newtheorem{example}[theorem]{Example}

\title[On a conjecture of Han and Xiong]
{On a conjecture of Han and Xiong for fractional Gaussian binomial coefficients}

\author{Ken Ono}
\address{Axiom Math, 124 University Avenue, Palo Alto, CA 94301}
\email{ken@axiommath.ai}

\date{July 2026}

\subjclass[2020]{Primary 05A30; Secondary 05A20, 11P84, 68V15, 68V20}
\keywords{Gaussian binomial coefficient, integer trace, fractional index,
partitions, Lean, formalization}

\begin{document}

\begin{abstract}
Han and Xiong recently extended the Gaussian binomial coefficient
$\qbinom{r+k}{k}$ to positive rational $r$ and conjectured that its ``integer
trace'', the integer-exponent part of the resulting power
series, is coefficientwise largest at $r=1/2$.  We prove a support-dominance
theorem comparing rational parameters under an explicit divisibility
condition.  It settles the conjecture for every $r\geq 1/2$ and reduces the
full conjecture to the unit fractions $r=\frac{1}{2m}$, only finitely many of which
are nontrivial for each fixed $k$.  A computer  computation
then verifies the conjecture for every positive rational $r$ and every
$k\leq 200$.  The theoretical results were autonomously produced and verified in Lean by AxiomProver.
\end{abstract}

\maketitle

\section{Introduction and Statement of Results}\label{sec:intro}

The binomial coefficient $\binom{N}{k}$ counts the $k$-element subsets of an
$N$-element set.  Its classical $q$-analogue is defined using the
\emph{$q$-shifted factorial}
\[
  (z;q)_n:=\prod_{j=0}^{n-1}(1-zq^j),
  \qquad (z;q)_0:=1.
\]
For integers $N\geq k\geq 0$, the \emph{Gaussian binomial coefficient} (or
\emph{$q$-binomial coefficient}) is
\begin{equation}\label{eq:qbinom-def}
  \qbinom{N}{k}
  :=\frac{(q;q)_N}{(q;q)_k(q;q)_{N-k}}
  =\prod_{j=1}^{k}\frac{1-q^{N-k+j}}{1-q^{j}}.
\end{equation}
Despite its appearance as a rational function, $\qbinom{N}{k}$ is a polynomial
in $q$ with nonnegative integer coefficients, and it relates to the classical
objects in several standard ways.  First, it recovers
$\binom{N}{k}$ as a limiting case: each factor
$(1-q^{N-k+j})/(1-q^{j})$ in \eqref{eq:qbinom-def} tends to $(N-k+j)/j$ as
$q\to 1$, whence
\[
  \lim_{q\to 1}\qbinom{N}{k}=\binom{N}{k}.
\]
Second, it satisfies the finite \emph{$q$-binomial theorem}
\begin{equation}\label{eq:finite-q-binomial}
  (z;q)_k
  =\sum_{s=0}^{k}(-1)^s q^{\binom{s}{2}}\qbinom{k}{s}z^s,
\end{equation}
which specializes at $q=1$ to the classical binomial theorem
\[
  (1-z)^k=\sum_{s=0}^{k}(-1)^s\binom{k}{s}z^s
\]
(for example, see
\cite[Theorem~3.3]{Andrews} or \cite[Chapter~1]{GasperRahman}).
Combinatorially, $\qbinom{N}{k}$ is the rank-generating function for integer
partitions whose Ferrers diagrams fit inside a $k\times(N-k)$ rectangle
(for example, see
\cite[Chapter~3]{Andrews} and \cite[Section~1.7]{Stanley}).

Han and Xiong \cite{HanXiong} recently asked what becomes of these objects
when the index is allowed to be \emph{fractional}.  For a positive rational
number $r$ and an integer $k\geq 0$, they define
\begin{equation}\label{eq:generalized-qbinom}
  G_{r,k}(q)
  :=\qbinom{r+k}{k}
  :=\frac{(q^{r+1};q)_k}{(q;q)_k},
\end{equation}
which agrees with \eqref{eq:qbinom-def} when $r$ is a positive integer (take
$N=r+k$).  If $r=a/b$ is nonintegral and in lowest terms, then $G_{r,k}(q)$ is
naturally a formal power series in the fractional power $q^{1/b}$.  Han and
Xiong extract from it an ordinary power series by discarding every term whose
exponent is not an integer.  Writing $\TrZ$ for this \emph{integer trace}
(made precise in Section~\ref{sec:trace-criteria}), they set
\begin{equation}
  \T_{r,k}(q):=\TrZ\bigl(G_{r,k}(q)\bigr)\in\Z[[q]].
\end{equation}
When $r$ is a positive integer, every exponent occurring in $G_{r,k}$ is
already an integer, nothing is discarded, and $\T_{r,k}=G_{r,k}=\qbinom{r+k}{k}$
is the ordinary Gaussian binomial polynomial.  Thus taking the trace is
nontrivial only when $r\notin\Z$.  In that case,
Proposition~\ref{prop:trace-formula} shows that the finite $q$-binomial
numerator retains only the summands whose indices are divisible by the
reduced denominator of $r$, often leaving a sparse subseries; the resulting
trace is no longer given by an evident product formula.
Example~\ref{ex:k4} carries out the extraction for three values of $r$.
To compare such series coefficientwise, for $F,G\in\Q[[q]]$ we write
\[
  F\geqcoef G
  \quad\Longleftrightarrow\quad
  \coeff{n}{(F-G)}\geq 0\quad\text{for every }n\geq 0.
\]
This fractional-index framework is due to \cite{HanXiong}.  We have
changed only the notation, writing $\T_{r,k}$ for their
$\bigl[\begin{smallmatrix}r+k\\k\end{smallmatrix}\bigr]_q\big|_{\Z}$ and
$\geqcoef$ for their $\geq_q$.  The central conjecture of
\cite[Conjecture 2.2]{HanXiong} asserts that, among
all positive rational parameters, $r=1/2$ produces the largest integer trace.

\begin{conjecture}[Han--Xiong]\label{conj:HX}
For every positive rational number $r$ and every integer $k\geq 1$, we have
\begin{equation}\label{eq:HX-conjecture}
  \T_{\frac{1}{2},k}(q)\geqcoef \T_{r,k}(q).
\end{equation}
\end{conjecture}

The following example illustrates the conjecture in the smallest case that
our results identify as genuinely nontrivial.

\begin{example}\label{ex:k4}
Take $k=4$, and let us extract the trace for $r=1/4$ from the definition.  Here
$z=q^{r+1}=q^{5/4}$, and \eqref{eq:finite-q-binomial} gives
\[
  \bigl(q^{5/4};q\bigr)_4
  =1
  -q^{5/4}\qbinom{4}{1}
  +q^{7/2}\qbinom{4}{2}
  -q^{27/4}\qbinom{4}{3}
  +q^{11}.
\]
Each $\qbinom{4}{s}$ is a polynomial in $q$ with integer exponents, so all the
exponents contributed by the $s$-th term lie in $E+\Z_{\geq 0}$, where $E$ is
the exponent displayed in front of it.  Consequently the $s$-th term
contributes to the integer trace precisely when that displayed exponent is an
integer.  Of the five values $0,\ \tfrac54,\ \tfrac72,\ \tfrac{27}4,\ 11$ only
the first and last qualify, and since $1/(q;q)_4$ has integer exponents we may
extract the trace of the numerator first and divide afterwards:
\[
  \T_{\frac{1}{4},4}(q)=\frac{1+q^{11}}{(q;q)_4}
  =1+q+2q^2+3q^3+5q^4+6q^5+9q^6+11q^7+15q^8+\cdots.
\]
Note that three of the five terms of the numerator, including the entire
contribution of $\qbinom{4}{2}$, are discarded.

The same computation for $r=1/2$ has $z=q^{3/2}$ and exponents
$E=\tfrac{s(s+2)}2$, that is $0,\ \tfrac32,\ 4,\ \tfrac{15}2,\ 12$.  Now the
terms $s=0,2,4$ survive, and each carries the sign $(-1)^s=+1$, so
\begin{align*}
  \T_{\frac{1}{2},4}(q)
  &=\frac{1+q^{4}\qbinom{4}{2}+q^{12}}{(q;q)_4}
   =\frac{1+q^{12}}{(q;q)_4}+\frac{q^{4}}{(q;q)_2^{\,2}}\\
  &=1+q+2q^2+3q^3+6q^4+8q^5+14q^6+19q^7+29q^8+\cdots,
\end{align*}
using $\qbinom{4}{2}/(q;q)_4=1/(q;q)_2^{\,2}$.  Subtracting,
\[
  \T_{\frac{1}{2},4}(q)-\T_{\frac{1}{4},4}(q)
  =\frac{q^4\qbinom{4}{2}+q^{12}-q^{11}}{(q;q)_4}
  =q^4+2q^5+5q^6+8q^7+14q^8+\cdots
  \geqcoef 0,
\]
in agreement with \eqref{eq:HX-conjecture}.  The inequality is not termwise
obvious: the numerator contains $-q^{11}$, so positivity is
produced only after division by $(q;q)_4$.  That numerator is the polynomial
$D_{2,4}$ of Section~\ref{subsec:cumulative}, and this is the smallest case in
which the phenomenon occurs.

Finally, $r=1/3$ shows what an odd denominator does.  Here the exponents are
$0,\ \tfrac43,\ \tfrac{11}3,\ 7,\ \tfrac{34}3$, so only $s=0$ and $s=3$
survive.  But $s=3$ is odd, so its sign is $(-1)^3=-1$ and
\[
  \T_{\frac{1}{3},4}(q)=\frac{1-q^{7}\qbinom{4}{3}}{(q;q)_4}
  =\frac{1}{(q;q)_4}-\frac{q^{7}}{(q;q)_3(q;q)_1}.
\]
This series has $\coeff{19}{\T_{1/3,4}}=-8$.  Thus an integer trace need not
have nonnegative coefficients; the conjecture compares two traces and makes
no separate positivity assertion about either one.
\end{example}

Han and Xiong established Conjecture~\ref{conj:HX} in several cases.  They
proved it for every positive rational $r$ when $k\leq 3$
\cite[Proposition 4.1]{HanXiong}, and for every $k\geq 1$ in the two infinite
parameter families $r=m+1/2$ ($m\geq 0$ an integer)
\cite[Proposition 5.1]{HanXiong} and $r=m$ ($m\geq 1$ an integer)
\cite[Proposition 5.2]{HanXiong}.  Beyond these families, they verified
the two parameters $r=1/3$ and $r=1/4$ for all $1\leq k\leq 150$ by exact
symbolic computation \cite[Propositions 6.1 and 6.3]{HanXiong}.

In this paper we prove a comparison principle for pairs of rational
parameters whose positive summand-index sets satisfy an explicit divisibility
condition.  The two infinite families above follow directly from this
principle, and the cases $k\leq 3$ follow after the canonical reduction below.
The earlier finite computations for $r=1/3$ and $r=1/4$ are instead subsumed
by our exact verification through $k=200$.  For a positive rational number
$x$, let $\den(x)$ denote the denominator of $x$ in lowest terms.  Our main
theorem is the following.

\begin{theorem}[Support dominance]\label{thm:support-dominance}
Let $u=c/d$ and $r=a/b$ be positive rational numbers in lowest terms, and
suppose that
\begin{equation}\label{eq:dominance-hypotheses}
  d\ \text{is even},
  \qquad d\mid \lcm(b,2),
  \qquad u\leq r.
\end{equation}
Then, for every integer $k\geq 1$, we have
\[
  \T_{u,k}(q)\geqcoef \T_{r,k}(q).
\]
\end{theorem}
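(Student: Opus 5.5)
The plan is to write both traces as explicit sums of manifestly nonnegative series and to compare them term by term. By the finite $q$-binomial theorem \eqref{eq:finite-q-binomial} with $z=q^{r+1}$,
\[
  G_{r,k}(q)=\frac{1}{(q;q)_k}\sum_{s=0}^{k}(-1)^s q^{\binom{s}{2}+s+sr}\qbinom{k}{s}.
\]
The $s$-th summand has exponents in $sr+\Z$. Since $r=a/b$ is in lowest terms, it survives the integer trace exactly when $b\mid s$; this is the content of Proposition~\ref{prop:trace-formula}. Using $\qbinom{k}{s}/(q;q)_k=1/\bigl((q;q)_s(q;q)_{k-s}\bigr)$, I would record
\[
  \T_{r,k}(q)=\sum_{\substack{0\le s\le k\\ b\mid s}}(-1)^s\, q^{\binom{s}{2}+s+sr}\,F_s(q),
  \qquad F_s(q):=\frac{1}{(q;q)_s(q;q)_{k-s}},
\]
and the analogous formula for $\T_{u,k}$ with $d\mid s$. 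Each $F_s$ has nonnegative coefficients. Because $d$ is even, every index in the sum for $u$ is even, so every summand of $\T_{u,k}$ carries the sign $+1$.

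Next I would split the summands of $\T_{r,k}$ by parity. The odd $s$ with $b\mid s$ contribute $-q^{(\cdots)}F_s\preceq 0$, so discarding them only increases the right-hand side. For even $s$ with $b\mid s$ we have $\lcm(b,2)\mid s$, hence $d\mid s$ by hypothesis \eqref{eq:dominance-hypotheses}. So every surviving positive index $s$ for $r$ also indexes a $+$ term for $u$. The summands of $\T_{u,k}$ indexed by $s$ with $d\mid s$ but $s$ not in the even part of $r$'s support are nonnegative, and can be thrown away. It therefore suffices to show, for each even $s$ with $b\mid s$ and $0\le s\le k$, that
\[
  q^{\binom{s}{2}+s+su}F_s\ \geqcoef\ q^{\binom{s}{2}+s+sr}F_s .
\]

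Set $e=s(r-u)$. Since $b\mid s$ and $d\mid s$, both $sr$ and $su$ are integers, so $e$ is an integer, and $e\ge 0$ because $u\le r$. For $s=0$ the two sides coincide. For $e=0$ there is nothing to prove. Otherwise the claim reduces to $(1-q^{e})F_s\geqcoef 0$. Since $k\ge1$, at least one of $(q;q)_s$, $(q;q)_{k-s}$ contains the factor $1-q$. Hence $F_s=H_s/(1-q)$ with $H_s$ having nonnegative coefficients, and $(1-q^e)F_s=(1+q+\cdots+q^{e-1})H_s\geqcoef 0$.

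Summing these inequalities over $s$, and adding back the discarded terms, which were nonnegative on the $u$ side and nonpositive on the $r$ side, gives $\T_{u,k}\geqcoef\T_{r,k}$. The only step needing care is this monotonicity-under-shift fact together with the integrality of $e$. The divisibility hypothesis is exactly what makes the supports nest and the shift integral, so I expect no real obstacle beyond bookkeeping.
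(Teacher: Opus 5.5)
Your proposal is correct and follows the paper's proof essentially step for step. It uses the same signed trace formula and the same nesting of $\{s:\lcm(b,2)\mid s\}$ inside $\{s:d\mid s\}$, discards the unmatched and odd-index terms because they are favorable, and compares matched terms by the same factor-of-$(1-q)$ shift argument, which is the paper's Lemma~\ref{lem:shift-dominance}. Your check that $s(r-u)$ is a nonnegative integer is exactly the paper's observation that $E_u(s)\le E_r(s)$ are both integers for $s$ in the positive index set of $r$.
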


Theorem~\ref{thm:support-dominance} includes monotonicity for each fixed
even denominator and also compares parameters with different reduced
denominators whenever the divisibility condition in
\eqref{eq:dominance-hypotheses} holds.  Taking $u=1/2$, whose denominator
$d=2$ is even and divides $\lcm(b,2)$ for every $b$, we obtain the conjecture
on the entire half-line $r\geq 1/2$.

\begin{corollary}\label{cor:half-line}
For every rational number $r\geq 1/2$ and every integer $k\geq 1$, we have
\[
  \T_{1/2,k}(q)\geqcoef \T_{r,k}(q).
\]
\end{corollary}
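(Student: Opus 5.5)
The corollary is the special case $u=\tfrac12$ of Theorem~\ref{thm:support-dominance}, so the plan is simply to check that this choice of $u$ satisfies the hypotheses \eqref{eq:dominance-hypotheses} for every rational $r\geq 1/2$. We do not need to reopen the trace formula of Proposition~\ref{prop:trace-formula} or any of the cumulative-difference machinery; all of that is packaged inside the theorem.

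Concretely, write $u=1/2=c/d$ with $c=1$, $d=2$. This is in lowest terms, and $d=2$ is even. Next write $r=a/b$ in lowest terms with $a,b\geq 1$. Since $2\mid \lcm(b,2)$ for every positive integer $b$, the divisibility condition $d\mid\lcm(b,2)$ holds regardless of the parity of $b$. Finally, the assumption $r\geq 1/2$ is exactly $u\leq r$. Note that the case $r=1/2$ itself is also covered, where the conclusion is trivially an equality.

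With all three conditions in \eqref{eq:dominance-hypotheses} verified, Theorem~\ref{thm:support-dominance} yields $\T_{1/2,k}(q)\geqcoef \T_{r,k}(q)$ for every integer $k\geq 1$, which is the claim.

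I expect no real obstacle here. The only point worth a sentence is that the hypothesis is on the reduced denominator $b$ of $r$ only through $\lcm(b,2)$. Consequently both odd $b$ (including integer $r$, where $b=1$) and even $b$ are handled uniformly, and this is why the entire half-line $r\geq 1/2$ is reached at once.
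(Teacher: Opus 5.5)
Your proposal is correct and is exactly the paper's proof. Specialize Theorem~\ref{thm:support-dominance} to $u=1/2$: here $d=2$ is even and divides $\lcm(b,2)$ for every $b$, and $u\leq r$ is precisely the hypothesis $r\geq 1/2$.
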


Corollary~\ref{cor:half-line} recovers both infinite families of
\cite{HanXiong} (namely $r=m$ and $r=m+1/2$) and extends them to every
reduced fraction $a/b\geq 1/2$, of arbitrary denominator.
Consequently, Conjecture~\ref{conj:HX} is open only for parameters
$0<r<1/2$, and there, by Theorem~\ref{thm:finite-verification} below, only
for $k>200$.

Theorem~\ref{thm:support-dominance} does more: it collapses that remaining
interval onto a single canonical sequence of parameters.  Define the
\emph{even core} of a positive rational $r$ by
\begin{equation}\label{eq:even-core}
  r^{\flat}:=\frac{1}{\lcm(\den(r),2)},
\end{equation}
so that if $r=a/b$ is reduced, then
\[
 r^{\flat}=
 \begin{cases}
   \frac{1}{b},& b\text{ even},\\
   \frac{1}{2b},& b\text{ odd}.
 \end{cases}
\]
Therefore, $r^{\flat}$ depends only on the denominator of $r$, not on its numerator.
In these terms the reduction takes the following form.

\begin{theorem}[Canonical reduction]\label{thm:canonical-reduction}
For every positive rational number $r$ and every integer $k\geq 1$, we have
\begin{equation}\label{eq:core-dominates}
  \T_{r^{\flat},k}(q)\geqcoef \T_{r,k}(q).
\end{equation}
Consequently, Conjecture~\ref{conj:HX} is equivalent to the family of
inequalities
\begin{equation}\label{eq:unit-family}
  \T_{\frac{1}{2},k}(q)\geqcoef \T_{\frac{1}{2m},k}(q)
  \qquad(m\geq 1,\ k\geq 1).
\end{equation}
Moreover, for each fixed $k$ it suffices to verify \eqref{eq:unit-family} in
the finite range
\begin{equation}\label{eq:finite-unit-family}
  2\leq m\leq \left\lfloor\frac{k}{2}\right\rfloor.
\end{equation}
\end{theorem}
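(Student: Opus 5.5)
The first assertion \eqref{eq:core-dominates} should follow directly from Theorem~\ref{thm:support-dominance} with $u=r^{\flat}$. Write $r=a/b$ in lowest terms and set $D=\lcm(b,2)$, so that $r^{\flat}=1/D$ with $c=1$ and $d=D$; this fraction is automatically in lowest terms. I would check the three hypotheses in \eqref{eq:dominance-hypotheses}. First, $d=D$ is even by construction. Second, $d\mid\lcm(b,2)$ holds trivially since $d=\lcm(b,2)$. Third, $u\le r$ holds because $a\ge 1$ and $b\mid D$ give $r=a/b\ge 1/b\ge 1/D=u$. The theorem then yields $\T_{r^{\flat},k}\geqcoef\T_{r,k}$ for all $k\ge1$.

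For the equivalence, one direction is immediate: Conjecture~\ref{conj:HX} applied to $r=\frac1{2m}$ gives \eqref{eq:unit-family}. Conversely, assume \eqref{eq:unit-family} and let $r$ be arbitrary. Since $D$ is even, $r^{\flat}=\frac{1}{2m}$ with $m=D/2\ge1$. Combining \eqref{eq:unit-family} with \eqref{eq:core-dominates} and using transitivity of $\geqcoef$ (a difference of two series with nonnegative coefficients is again a sum of such) gives $\T_{1/2,k}\geqcoef\T_{r^{\flat},k}\geqcoef\T_{r,k}$.

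For the finite range \eqref{eq:finite-unit-family}, the case $m=1$ is trivial equality, so I must dispose of the case $2m>k$. Here I would use Proposition~\ref{prop:trace-formula}, as in Example~\ref{ex:k4}. With $z=q^{r+1}$, the $s$-th summand of \eqref{eq:finite-q-binomial} carries the exponent $s(r+1)+\binom{s}{2}=\frac{s}{2m}+\binom{s+1}{2}$. This exponent is an integer exactly when $2m\mid s$. If $2m>k$, only $s=0$ survives among $0\le s\le k$, so $\T_{1/(2m),k}=1/(q;q)_k$.

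For $r=1/2$, the surviving indices are the even $s$, each with sign $(-1)^s=+1$. Hence
\[
\T_{1/2,k}-\frac{1}{(q;q)_k}=\sum_{\substack{2\le s\le k\\ s\text{ even}}} q^{s/2+\binom{s+1}{2}}\qbinom{k}{s}\frac{1}{(q;q)_k}.
\]
Each summand has nonnegative coefficients, because both $\qbinom{k}{s}$ and $1/(q;q)_k$ do. Thus \eqref{eq:unit-family} holds automatically whenever $m>\lfloor k/2\rfloor$, and only $2\le m\le\lfloor k/2\rfloor$ remains.

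There is no real obstacle once Theorem~\ref{thm:support-dominance} is available. The only point needing care is confirming the integrality criterion for the exponents via Proposition~\ref{prop:trace-formula}, namely that the integer-exponent part of $z^s\qbinom{k}{s}$ is all or nothing according to whether $s(r+1)+\binom{s}{2}\in\Z$.
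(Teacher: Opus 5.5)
Your proposal is correct and follows the paper's proof essentially step for step: you apply Theorem~\ref{thm:support-dominance} with $u=r^{\flat}=1/\lcm(b,2)$, obtain the equivalence by chaining \eqref{eq:unit-family} with \eqref{eq:core-dominates} through transitivity of $\geqcoef$, and dispose of the case $2m>k$ by noting that $\T_{1/(2m),k}=1/(q;q)_k$ while every surviving summand of $\T_{1/2,k}$ has nonnegative coefficients. The only cosmetic differences are that you check $r^{\flat}\le r$ via $a/b\ge 1/b\ge 1/\lcm(b,2)$ rather than by splitting on the parity of $b$, and that you re-derive the integer-support computation from the $q$-binomial theorem instead of quoting Proposition~\ref{prop:trace-formula} directly.
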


We call the unit fractions $\frac{1}{2m}$ the \emph{primitive parameters} of this
reduction.  Theorem~\ref{thm:canonical-reduction} shows that numerators
greater than one and odd reduced denominators never produce new cases: the
entire two-parameter conjecture, over all of $\Q_{>0}$, is governed by the
single sequence $1/4,\,1/6,\,1/8,\ldots$ (the case $m=1$ being an equality).
For example, a proof of the primitive comparison for $r=1/6$ would
automatically yield the comparison for $r=1/3$, whereas the reverse
implication does not follow from the dominance relation.  The reduction is
purely structural and does not depend on computation.

For each fixed $k$, the reduction leaves only finitely many primitive
inequalities.  Using the sufficient cumulative-coefficient criterion below,
an exact computation verifies all of them for $k\leq 200$, extending the
numerical evidence of \cite{HanXiong} from two parameters through $k=150$ to
\emph{all} parameters through $k=200$.

\begin{theorem}[Finite verification]\label{thm:finite-verification}
For every positive rational number $r$ and every integer $k$ with
$1\leq k\leq 200$,
\[
  \T_{\frac{1}{2},k}(q)\geqcoef \T_{r,k}(q).
\]
\end{theorem}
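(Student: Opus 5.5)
The plan is to reduce Theorem~\ref{thm:finite-verification} to a finite list of polynomial positivity checks. For a fixed $k$ with $1\leq k\leq 200$ and an arbitrary positive rational $r$, Theorem~\ref{thm:canonical-reduction} gives $\T_{r^{\flat},k}\geqcoef\T_{r,k}$ with $r^{\flat}=\frac{1}{2m}$ for some $m\geq 1$. It then suffices to prove \eqref{eq:unit-family} for each $m$ in the finite range \eqref{eq:finite-unit-family}, because $m=1$ is an equality and the remaining values of $m$ are covered by that theorem. Transitivity of $\geqcoef$ then finishes the argument. For $k\leq 3$ the range \eqref{eq:finite-unit-family} is empty, so nothing is left to check there. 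This is consistent with Example~\ref{ex:k4}, where $k=4$ is the first nontrivial case.

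For each remaining pair $(m,k)$ I would write both traces over the common denominator $(q;q)_k$ using Proposition~\ref{prop:trace-formula}. By that formula, $\T_{1/(2m),k}(q)$ keeps exactly the summands $s\equiv 0\pmod{2m}$ of \eqref{eq:finite-q-binomial} with $z=q^{1+1/(2m)}$. The exponent of such a summand is $\binom{s}{2}+s+\frac{s}{2m}$, and its sign is $+$ because $s$ is even. So the difference is
\[
\T_{\frac12,k}-\T_{\frac1{2m},k}=\frac{D_{m,k}(q)}{(q;q)_k},
\]
where $D_{m,k}$ is an explicit polynomial with integer coefficients of degree at most $\binom{k+1}{2}+\frac{k}{2}$. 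The obstacle is that $D_{m,k}$ can have negative coefficients, as the term $-q^{11}$ shows for $k=4$. Termwise positivity of the numerator therefore fails, and we must certify that $D_{m,k}/(q;q)_k$ has nonnegative coefficients, which is an infinite series.

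Here I would use the cumulative-coefficient criterion of Section~\ref{subsec:cumulative}. Since $1/(1-q)$ computes partial sums, and $1/(q;q)_k=\frac{1}{1-q}\cdot\frac{1}{(q^2;q)_{k-1}}$ where the second factor has nonnegative coefficients, it suffices that $D_{m,k}/(1-q)$ has nonnegative coefficients. That means every partial sum $\sum_{j\leq n}[q^j]D_{m,k}$ is nonnegative. Because $D_{m,k}$ is a polynomial, this is a finite check on its support, and the partial sums are constant beyond its degree, equal to $D_{m,k}(1)$. If this criterion fails for some pair, I would fall back on stronger finite variants. One option is to divide by a longer prefix $(q;q)_j$ before checking positivity of the resulting power series up to a bound after which it is eventually polynomial-dominated. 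The other is to compute the quotient exactly up to a certified degree.

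The computational step is then to iterate over $4\leq k\leq 200$ and $2\leq m\leq\lfloor k/2\rfloor$, which is roughly $10^4$ pairs. For each pair, compute $\qbinom{k}{s}$ exactly in integer arithmetic, assemble $D_{m,k}$, and verify the partial-sum condition. I expect the main obstacle to be whether the cumulative criterion actually holds in every case rather than merely being sufficient. The degrees reach about $2\cdot 10^4$ and the coefficients are large, so efficiency matters: I would precompute all the $\qbinom{k}{s}$ for fixed $k$ and reuse them across $m$. Correctness would be ensured by using exact big-integer arithmetic throughout.
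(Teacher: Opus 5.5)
Your proposal is correct and follows the paper's proof. You reduce via Theorem~\ref{thm:canonical-reduction} to the primitive pairs $2\le m\le\lfloor k/2\rfloor$. You then certify $D_{m,k}/(1-q)\in\Z_{\geq 0}[[q]]$ through the running-sum criterion of Lemma~\ref{lem:cumulative}, using exact integer arithmetic on complete coefficient vectors for all $9{,}801$ pairs. The paper reports that this check succeeds in every case (Proposition~\ref{prop:computer-output}), so your fallback variants are never needed.
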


The proof of Theorem~\ref{thm:finite-verification} is computer-assisted.
The program checks a finite list of coefficient inequalities over the
integers, using exact arithmetic and complete coefficient vectors.  It uses
no floating-point arithmetic, degree truncation, or probabilistic testing.
Theorems~\ref{thm:support-dominance} and
\ref{thm:canonical-reduction} are entirely independent of that calculation.
We stress that we do not prove \eqref{eq:unit-family} uniformly in $m$ and
$k$; in particular, the all $k$ case $r=1/4$ remains open.  The precise
remaining problem is recorded at the end of
Section~\ref{sec:proofs}.

This paper builds directly on \cite{HanXiong}.  The fractional-index
extension \eqref{eq:generalized-qbinom}, the integer trace, the order
$\geqcoef$, the exponent $E_r(s)$ and trace formula, the denominator-parity
observation, the coefficientwise-order properties used in
Lemma~\ref{lem:shift-dominance}, the identity underlying
\eqref{eq:half-explicit}, and
the polynomial $D_{2,k}=H_k$ are all theirs.  
Our new contributions are the cross-parameter comparison in
Theorem~\ref{thm:support-dominance}, the canonical reduction in
Theorem~\ref{thm:canonical-reduction}, the cumulative-coefficient criterion of
Lemma~\ref{lem:cumulative}, and the exact verification for all parameters
through $k=200$.
 Han and Xiong's separate
arguments for the half-integer and positive-integer families do not provide
the cross-denominator comparison needed here.

The proof of Theorem~\ref{thm:support-dominance} starts from the trace formula,
which writes $\T_{a/b,k}$ as a signed sum of series with nonnegative
coefficients indexed by multiples of $b$.  When the comparison parameter $u$
has even denominator, every surviving sign on its side is positive.  The
divisibility hypothesis nests the positive summand-index set for $\T_{r,k}$
inside the summand-index set for $\T_{u,k}$, while $u\leq r$ moves every
matched term to an earlier exponent.  Lemma~\ref{lem:shift-dominance} converts
that shift into coefficientwise dominance.  Taking $u=1/2$ gives the
half-line result, and taking $u=r^{\flat}$ gives the canonical reduction.

The paper is organized as follows.  Section~\ref{sec:trace-criteria}
collects the required preliminaries.  Section~\ref{sec:proofs} proves
Theorems~\ref{thm:support-dominance}, \ref{thm:canonical-reduction}, and
\ref{thm:finite-verification}, together with their corollaries, and closes by
recording the remaining open problem.  Finally, in
Appendix~\ref{sec:AI}, we describe how AxiomProver, an AI system under
development, autonomously found, formalized, and verified in Lean the proofs
of Theorem~\ref{thm:support-dominance}, Corollary~\ref{cor:half-line}, and
Theorem~\ref{thm:canonical-reduction}, and we record precisely what was and
was not supplied to it.

\section*{Acknowledgements} 
\noindent This work served as a test case for AxiomProver, an AI system for Lean proof development currently under active development. We thank the engineers who built it: Srihari Ganesh, Tobias Gessler, Leopold Haller, Vasily Ilin, Albert Jiang, Tadeusz Jordan, Andranik Kurghinyan, Kenny Lau, Simon Mahns, Michał Mogielnicki, Gaurang Pendharkar, Karun Ram, Aditya Ramabadran, Aleksey Tsaplin, and Chenkai Wang. The author also thanks Michał Mogielnicki for managing the repo.

\section{Nuts and bolts}\label{sec:trace-criteria}

This section collects the tools required for the proofs of the results
stated in the introduction: the integer-trace formalism, the exact trace
formula, a shift-dominance lemma, and the cumulative-coefficient criterion
behind the exact verification.

\subsection{The integer trace}\label{subsec:trace}

We recall the formal setting of \cite[Section~1]{HanXiong}.  Let
\[
  \Pui:=\bigcup_{M\geq 1}\Q[[q^{1/M}]]
\]
be the directed union of fractional power series rings (i.e. whose exponents are
nonnegative rationals with a common denominator).  Since the reduced
denominator of $r$ is unbounded as $r$
ranges over $\Q_{>0}$, no single $\Q[[q^{1/M}]]$ contains the whole family
$\{G_{r,k}\}$, and $\Pui$ provides a common ambient ring.  If
\[
  F(q)=\sum_{\alpha\in \frac1M\Z_{\geq 0}} c(\alpha)q^{\alpha},
\]
then its \emph{integer trace} is
\begin{equation}\label{eq:trace-definition}
  \TrZ(F):=\sum_{n\geq 0}c(n)q^n.
\end{equation}
As observed in \cite{HanXiong}, this is the natural projection of $\Pui$ onto
$\Q[[q]]$ retaining the integer-exponent part; it is $\Q$-linear.  It is not a
ring homomorphism, but it commutes with multiplication by an ordinary power
series, which is the only property we need.

\begin{lemma}\label{lem:trace-factor}
If $F\in\Pui$ and $H\in\Q[[q]]$, then
\[
  \TrZ(FH)=\TrZ(F)H.
\]
\end{lemma}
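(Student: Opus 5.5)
Fix $M\ge 1$ with $F\in\Q[[q^{1/M}]]$ and write $F=\sum_{\alpha\in\frac1M\Z_{\ge0}}c(\alpha)q^{\alpha}$ and $H=\sum_{m\ge0}h_m q^m$. The plan is to compare coefficients directly. The product $FH$ is again an element of $\Q[[q^{1/M}]]\subset\Pui$, because $q^m=(q^{1/M})^{Mm}$, so $H$ embeds in that ring. For each $\beta\in\frac1M\Z_{\ge0}$ its coefficient of $q^\beta$ is
\[
  [q^\beta](FH)=\sum_{\substack{m\ge 0\\ m\le \beta}} h_m\, c(\beta-m),
\]
which is a finite sum, so everything is well defined formally.

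Now restrict to $\beta=n\in\Z_{\ge0}$. Since $m$ ranges over integers, $n-m$ is a nonnegative integer in every summand. Hence the $n$-th coefficient of $\TrZ(FH)$ is $\sum_{m=0}^{n}h_m c(n-m)$. This is exactly the $n$-th coefficient of the ordinary product $\TrZ(F)\,H=\bigl(\sum_{j\ge0}c(j)q^j\bigr)\bigl(\sum_m h_mq^m\bigr)$. Equality of all integer-indexed coefficients gives the identity.

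The only point needing care is that the definition of $\TrZ$ does not depend on the choice of $M$. Enlarging $M$ to a multiple only adds zero coefficients and leaves $c(n)$ for integer $n$ unchanged, so the trace is well defined on the directed union $\Pui$. There is no real obstacle beyond this. The key observation is that shifting an exponent by an integer preserves whether it is an integer, so the integer part and the non-integer part of $F$ do not mix under multiplication by $H$.
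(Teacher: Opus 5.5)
Your proof is correct and follows the paper's own argument. Both compare coefficients at integer exponents. Both express the coefficient of $q^{n}$ in $FH$ as the finite convolution $\sum_{m=0}^{n}h_m c(n-m)$, using the fact that every contributing exponent of $F$ must be the integer $n-m$. Your added remark that $\TrZ$ does not depend on the choice of $M$ is a harmless point that the paper leaves implicit.
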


\begin{proof}
Every exponent occurring in $H$ is an integer.  Hence, for an exponent
$\alpha$ occurring in $F$ and an exponent $n$ occurring in $H$, the sum
$\alpha+n$ is an integer if and only if $\alpha$ is an integer.  Explicitly,
if $F=\sum_{\alpha}c(\alpha)q^{\alpha}$ and $H=\sum_{n}h(n)q^{n}$, then for
an integer exponent $n_0\geq 0$ the coefficient of $q^{n_0}$ in $FH$ is
$\sum_{\alpha+n=n_0}c(\alpha)h(n)$, a finite sum since $0\leq n\leq n_0$, and
every $\alpha$ contributing to it is the integer $n_0-n$.  Retaining
the integer-exponent terms before or after multiplication by $H$ therefore
gives the same series.
\end{proof}

\subsection{The exact trace formula}\label{subsec:trace-formula}

Substituting $z=q^{r+1}$ into the $q$-binomial theorem
\eqref{eq:finite-q-binomial} gives, as in \cite[eqs.\ (6)--(7)]{HanXiong},
\begin{equation}\label{eq:numerator-expansion}
  (q^{r+1};q)_k
  =\sum_{s=0}^{k}(-1)^s q^{E_r(s)}\qbinom{k}{s},
  \qquad
  E_r(s):=\binom{s+1}{2}+rs.
\end{equation}
Since $\binom{s+1}{2}=\binom{s}{2}+s$, our $E_r(s)$ equals the exponent
$E(s,r)=\binom{s}{2}+(r+1)s$ of \cite[eq.\ (7)]{HanXiong}.
Han and Xiong call $\{s:0\leq s\leq k,\ E_r(s)\in\Z\}$ the \emph{integer
support} of $\T_{r,k}$, since only these indices contribute integer powers of
$q$ to the numerator.  The following proposition is a restatement of their
analysis in \cite[Section~2]{HanXiong}, recorded here in the form used below;
we include the short proof to keep the paper self-contained.

\begin{proposition}[Trace formula]\label{prop:trace-formula}
Let $r=a/b$ be a positive rational number in lowest terms.  For $k\geq 1$,
\begin{align}
  \T_{a/b,k}(q)
  &=\sum_{\substack{0\leq s\leq k\\ b\mid s}}
      (-1)^s q^{E_{a/b}(s)}
      \frac{1}{(q;q)_s(q;q)_{k-s}}
      \label{eq:trace-formula-s}\\
  &=\sum_{j=0}^{\lfloor k/b\rfloor}
      (-1)^{bj}
      \frac{q^{\binom{bj+1}{2}+aj}}
           {(q;q)_{bj}(q;q)_{k-bj}}.
      \label{eq:trace-formula-j}
\end{align}
\end{proposition}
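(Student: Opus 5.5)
The plan is to start from the expansion \eqref{eq:numerator-expansion} of the numerator, divide by $(q;q)_k$, and then apply the integer trace term by term.

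\textbf{Step 1: pull out the ordinary denominator.} Since $1/(q;q)_k\in\Q[[q]]$, Lemma~\ref{lem:trace-factor} together with linearity of $\TrZ$ gives
\[
  \T_{r,k}=\frac{1}{(q;q)_k}\,\TrZ\bigl((q^{r+1};q)_k\bigr)
  =\frac{1}{(q;q)_k}\sum_{s=0}^{k}(-1)^s\,\TrZ\Bigl(q^{E_r(s)}\qbinom{k}{s}\Bigr).
\]
Each $\qbinom{k}{s}$ is a polynomial in $q$ with integer exponents. Hence every exponent appearing in $q^{E_r(s)}\qbinom{k}{s}$ lies in $E_r(s)+\Z_{\ge0}$, exactly as in Example~\ref{ex:k4}. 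So the trace of the $s$-th summand is either the whole summand, when $E_r(s)\in\Z$, or $0$, when $E_r(s)\notin\Z$.

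\textbf{Step 2: identify the integer support.} This is the only arithmetic point. We have $E_{a/b}(s)=\binom{s+1}{2}+as/b$, and $\binom{s+1}{2}\in\Z$. Therefore $E_{a/b}(s)\in\Z$ if and only if $b\mid as$. Because $\gcd(a,b)=1$, this happens if and only if $b\mid s$.

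\textbf{Step 3: simplify each surviving term.} For each surviving $s$, use $\qbinom{k}{s}/(q;q)_k=1/\bigl((q;q)_s(q;q)_{k-s}\bigr)$. This yields \eqref{eq:trace-formula-s}.

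\textbf{Step 4: reindex.} Put $s=bj$ with $0\le j\le\lfloor k/b\rfloor$. Then $E_{a/b}(bj)=\binom{bj+1}{2}+aj$, and the sign becomes $(-1)^{bj}$. This gives \eqref{eq:trace-formula-j}.

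There is no genuine obstacle here. The one point needing care is that the trace may be applied summand by summand, and that all exponents within a single summand have the same fractional part. This is what lets the trace either keep or discard a summand as a whole.
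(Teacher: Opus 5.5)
Your proposal is correct and follows the paper's proof essentially step for step: Lemma~\ref{lem:trace-factor} pulls out $1/(q;q)_k$, then $E_{a/b}(s)\in\Z$ iff $b\mid as$ iff $b\mid s$, then the identity $\qbinom{k}{s}/(q;q)_k=1/\bigl((q;q)_s(q;q)_{k-s}\bigr)$, and finally the substitution $s=bj$. You also make explicit a point the paper leaves implicit: every exponent of $q^{E_r(s)}\qbinom{k}{s}$ lies in $E_r(s)+\Z_{\geq 0}$, so the trace keeps or discards each summand as a whole.
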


\begin{proof}
The integer $\binom{s+1}{2}$ contributes no fractional part to $E_{a/b}(s)$, so
$E_{a/b}(s)\in\Z$ if and only if $as/b\in\Z$, that is, if and only if
$b\mid as$; and since $\gcd(a,b)=1$ this holds exactly when $b\mid s$.
Lemma~\ref{lem:trace-factor} and
\eqref{eq:numerator-expansion} therefore give
\[
  \T_{a/b,k}(q)
  =\frac{1}{(q;q)_k}
    \sum_{\substack{0\leq s\leq k\\b\mid s}}
      (-1)^s q^{E_{a/b}(s)}\qbinom{k}{s}.
\]
Finally,
\[
  \frac{1}{(q;q)_k}\qbinom{k}{s}
  =\frac{1}{(q;q)_s(q;q)_{k-s}},
\]
which proves \eqref{eq:trace-formula-s}; setting $s=bj$ gives
\eqref{eq:trace-formula-j}.
\end{proof}

For later use, set
\begin{equation}\label{eq:Phi-definition}
  \Phi_{k,s}(q):=\frac{1}{(q;q)_s(q;q)_{k-s}}
  \qquad(0\leq s\leq k).
\end{equation}
Each $\Phi_{k,s}(q)$ has nonnegative integer coefficients.  Indeed, the
classical expansion $1/(q;q)_m=\sum_{n\geq0}p_m(n)q^n$, in which $p_m(n)$
counts the partitions of $n$ into parts of size at most $m$
\cite[Theorem~1.1]{Andrews}, exhibits $\Phi_{k,s}$ as a product of two such
series.  In the language
of Proposition~\ref{prop:trace-formula}, the summand indexed by $s$ occurs if
and only if $b\mid s$, and its sign is positive exactly when $s$ is even.
Hence the \emph{positive summand-index set} of $\T_{r,k}$ is
\begin{equation}\label{eq:positive-index-set}
  \{s:0\leq s\leq k,\ \lcm(b,2)\mid s\},
\end{equation}
and the \emph{negative summand-index set} is
\begin{equation}\label{eq:negative-index-set}
  \{s:0\leq s\leq k,\ b\mid s,\ s\text{ odd}\}.
\end{equation}
In particular, the negative summand-index set is empty when $b$ is even.  This
parity
dichotomy is observed in \cite[Section~2]{HanXiong}, where it is the
motivation for singling out $r=1/2$; the point of
Theorem~\ref{thm:support-dominance} is that it also drives a comparison
between two different parameters.

\subsection{A shift-dominance lemma}\label{subsec:shift}

The key point is that moving a monomial to a later integral exponent can only
decrease its product with $\Phi_{k,s}(q)$ in coefficientwise order.

\begin{lemma}[Shift dominance]\label{lem:shift-dominance}
Let $k\geq 1$, let $0\leq s\leq k$, and let $A$ and $B$ be nonnegative
integers with $A\leq B$.  Then
\begin{equation}\label{eq:shift-dominance}
  q^A\Phi_{k,s}(q)\geqcoef q^B\Phi_{k,s}(q).
\end{equation}
\end{lemma}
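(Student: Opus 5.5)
The plan is to reduce the inequality to monotonicity of the coefficients of $\Phi_{k,s}$, which will come from the fact that this product contains the factor $1/(1-q)$ whenever $k\geq 1$. Write $\Phi_{k,s}(q)=\sum_{n\geq 0}\phi(n)q^n$, and set $\phi(n)=0$ for $n<0$. Then
\[
  \coeff{n}{\bigl(q^A\Phi_{k,s}-q^B\Phi_{k,s}\bigr)}=\phi(n-A)-\phi(n-B).
\]
Because $A\leq B$, we have $n-A\geq n-B$. It therefore suffices to show that $\phi$ is weakly increasing on $\Z$, where the convention $\phi(n)=0$ for $n<0$ is built into the claim. Since the coefficients are nonnegative, what we actually need is that $\phi$ is weakly increasing on $\Z_{\geq 0}$.

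The key step is to show that $(1-q)\Phi_{k,s}(q)$ has nonnegative coefficients. Because $k\geq 1$, at least one of $s$ and $k-s$ is at least $1$. Without loss of generality, suppose $s\geq 1$; the other case is symmetric. Then
\[
  (1-q)\Phi_{k,s}(q)=\frac{1}{(q^2;q)_{s-1}\,(q;q)_{k-s}},
\]
which is a product of geometric series $1/(1-q^j)$. Each of these has nonnegative coefficients, so the product does as well.

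The coefficient of $q^n$ in $(1-q)\Phi_{k,s}$ is $\phi(n)-\phi(n-1)$, and we have just shown it is nonnegative. So $\phi$ is weakly increasing, with $\phi(-1)=0\leq\phi(0)=1$. Iterating gives $\phi(n-A)\geq\phi(n-B)$ for every $n\geq 0$, which is exactly \eqref{eq:shift-dominance}.

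Equivalently, one can argue directly: $q^A\Phi_{k,s}-q^B\Phi_{k,s}=q^A(1+q+\dots+q^{B-A-1})(1-q)\Phi_{k,s}$ is a product of series with nonnegative coefficients.

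There is no real obstacle here. The only point needing care is the hypothesis $k\geq 1$, which guarantees a factor $1/(1-q)$ in $\Phi_{k,s}$. For $k=0$ we would have $\Phi_{0,0}=1$ and the claim would fail, since $1\not\geqcoef q$.
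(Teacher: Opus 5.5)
Your proposal is correct and takes the same approach as the paper. The key step is the paper's observation that $k\geq 1$ forces a factor $1-q$ in the denominator, so $(1-q)\Phi_{k,s}(q)\in\Z_{\geq 0}[[q]]$. Your direct factorization $q^A(1+q+\cdots+q^{B-A-1})(1-q)\Phi_{k,s}(q)$ is exactly the paper's conclusion, and the monotone-coefficient version is the same fact read coefficientwise.
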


\begin{proof}
The claim is immediate when $A=B$.  Suppose that $A<B$.  Since $k\geq 1$, at
least one of $s$ and $k-s$ is positive.  Thus the denominator in
\eqref{eq:Phi-definition} contains at least one factor $1-q$; cancelling one
such factor exhibits $(1-q)\Phi_{k,s}(q)$ as a finite product of
geometric-series factors $(1-q^e)^{-1}$, whence
\begin{equation}\label{eq:Psi-positive}
  (1-q)\Phi_{k,s}(q)\in\Z_{\geq 0}[[q]].
\end{equation}
Using $1-q^{B-A}=(1-q)(1+q+\cdots+q^{B-A-1})$, we obtain
\[
  \bigl(q^A-q^B\bigr)\Phi_{k,s}(q)
  =q^A(1+q+\cdots+q^{B-A-1})(1-q)\Phi_{k,s}(q).
\]
Every factor on the right has nonnegative coefficients, proving
\eqref{eq:shift-dominance}.
\end{proof}

The condition $k\geq 1$ is needed only to guarantee a factor $1-q$ in the
denominator, and every comparison in Conjecture~\ref{conj:HX} has $k\geq 1$.
Lemma~\ref{lem:shift-dominance} combines two properties of the coefficientwise
order established in \cite[Lemma 3.1]{HanXiong}, namely that
$(q^{A}-q^{B})/(1-q)\geqcoef 0$ for $A\leq B$ and that division by factors
$1-q^{j}$ preserves the order; we have stated it in the single form needed
here.

\subsection{A cumulative-coefficient criterion}\label{subsec:cumulative}

The exact verification in Section~\ref{sec:proofs} rests on the following
sufficient criterion, which converts a coefficientwise inequality between two
traces into a \emph{finite} list of integer inequalities.  For $m\geq 1$,
Proposition~\ref{prop:trace-formula} applied to $r=\frac{1}{2m}$ (so $a=1$,
$b=2m$, every sign is positive, and
$E_{1/(2m)}(2m\ell)=\binom{2m\ell+1}{2}+\ell=2m^2\ell^2+(m+1)\ell$) gives
\begin{equation}\label{eq:A-m-k-definition}
  \T_{\frac{1}{2m},k}(q)=\frac{A_{m,k}(q)}{(q;q)_k},
\end{equation}
where
\begin{equation}\label{eq:A-m-k}
  A_{m,k}(q)
  :=\sum_{\ell=0}^{\lfloor k/(2m)\rfloor}
      q^{2m^2\ell^2+(m+1)\ell}
      \qbinom{k}{2m\ell}.
\end{equation}
For $m=1$, this is exactly the numerator for $r=1/2$ computed in
\cite[eq.\ (19)]{HanXiong}.  Put
\[
  D_{m,k}(q):=A_{1,k}(q)-A_{m,k}(q).
\]
The family $D_{m,k}$ directly generalizes the numerator polynomial
$H_k$ of \cite[eq.\ (24)]{HanXiong}: since $2m^2\ell^2+(m+1)\ell$ specializes
at $m=2$ to $\ell(8\ell+3)$ and at $m=1$ to $2\ell(\ell+1)$, and the terms
$\ell=0$ cancel in the difference, one has
\begin{equation}\label{eq:D2-is-H}
  D_{2,k}(q)=H_k(q)
  \qquad(k\geq 1).
\end{equation}
The case $m=2$ of the criterion below is therefore a statement about their
polynomial, and \cite[Conjecture 6.2]{HanXiong}, which predicts that
$H_k(q)\geqcoef 0$ for $k\geq 19$, would imply it in that range.

\begin{lemma}[Cumulative-coefficient criterion]\label{lem:cumulative}
Let $k\geq 1$ and $m\geq 1$.  If
\begin{equation}\label{eq:cumulative-criterion}
  \frac{D_{m,k}(q)}{1-q}\in\Z_{\geq 0}[[q]],
\end{equation}
then
\[
  \T_{\frac{1}{2},k}(q)\geqcoef \T_{\frac{1}{2m},k}(q).
\]
Moreover, if $N=\deg D_{m,k}$ and $D_{m,k}(q)=\sum_{n=0}^{N}d_nq^n$, then
\eqref{eq:cumulative-criterion} is equivalent to the finite set of
inequalities
\begin{equation}\label{eq:cumulative-sums}
  \sum_{n=0}^{j}d_n\geq 0
  \qquad(0\leq j\leq N).
\end{equation}
\end{lemma}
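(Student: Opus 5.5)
The plan is to write the difference of the two traces over the common denominator and then peel off one factor of $1-q$. By \eqref{eq:A-m-k-definition} applied with $m$ and with $1$,
\[
  \T_{\frac12,k}(q)-\T_{\frac{1}{2m},k}(q)=\frac{A_{1,k}(q)-A_{m,k}(q)}{(q;q)_k}=\frac{D_{m,k}(q)}{(q;q)_k}.
\]
Since $k\geq 1$, we have $(q;q)_k=(1-q)\prod_{e=2}^{k}(1-q^e)$. Hence
\[
  \frac{D_{m,k}(q)}{(q;q)_k}=\frac{D_{m,k}(q)}{1-q}\cdot\prod_{e=2}^{k}\frac{1}{1-q^e}.
\]
The first factor lies in $\Z_{\geq0}[[q]]$ by the hypothesis \eqref{eq:cumulative-criterion}. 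The second factor is a product of geometric series $\sum_{n\ge0}q^{en}$ with nonnegative coefficients; for $k=1$ it is the empty product $1$. A product of power series with nonnegative coefficients again has nonnegative coefficients, since each coefficient is a finite sum of products of nonnegative numbers. This gives $\T_{\frac12,k}\geqcoef\T_{\frac1{2m},k}$. This is the same mechanism as in Lemma~\ref{lem:shift-dominance}: borrow one factor $1-q$ from the denominator.

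For the equivalence, note that $D_{m,k}$ is a polynomial. Each $A_{m,k}$ is a finite sum of monomials times the polynomials $\qbinom{k}{2m\ell}$, so it has integer coefficients and $N=\deg D_{m,k}$ makes sense; if $D_{m,k}=0$, both conditions hold trivially. Expanding $1/(1-q)=\sum_{j\ge0}q^j$ gives
\[
  \coeff{j}{\frac{D_{m,k}(q)}{1-q}}=\sum_{n=0}^{\min(j,N)}d_n .
\]
For $j\leq N$ these are exactly the partial sums in \eqref{eq:cumulative-sums}. For $j>N$ every such coefficient equals the full sum $\sum_{n=0}^{N}d_n$, which is the case $j=N$ already in the list. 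The coefficients are integers automatically. So nonnegativity of all coefficients is equivalent to the finitely many inequalities \eqref{eq:cumulative-sums}.

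Nothing here is a real obstacle. The only points needing care are making sure the factor $1-q$ is actually present, which uses $k\geq1$, and the tail observation that the coefficients stabilize at $D_{m,k}(1)$ beyond degree $N$. In particular, the criterion forces $D_{m,k}(1)\geq0$, which is consistent since $D_{m,k}(1)=\sum_{\ell\ge1}\bigl(\binom{k}{2\ell}-[\,m\mid \ell\,]\binom{k}{2\ell}\bigr)\ge0$.
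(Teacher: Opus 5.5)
Your proof is correct and follows the paper's argument essentially line for line: you write the difference as $D_{m,k}/(q;q)_k$, split off the factor $1-q$ so that what remains is $\prod_{e=2}^{k}(1-q^e)^{-1}$ (equal to $1$ when $k=1$), and identify the coefficients of $D_{m,k}/(1-q)$ with the partial sums, which stabilize at the total sum for $j>N$. Your closing observation that $D_{m,k}(1)\geq 0$ is correct but is not needed for the lemma.
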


\begin{proof}
Since $(q;q)_k=(1-q)(q^2;q)_{k-1}$, we have
\[
  \T_{\frac{1}{2},k}(q)-\T_{\frac{1}{2m},k}(q)
  =\frac{D_{m,k}(q)/(1-q)}{(q^2;q)_{k-1}}.
\]
For $k=1$, the reciprocal of $(q^2;q)_{k-1}$ is $1$.  For $k\geq 2$, it
has nonnegative integer coefficients and is the generating function for
partitions into parts from $\{2,3,\ldots,k\}$.  Thus
\eqref{eq:cumulative-criterion} implies the desired inequality.

Since $1/(1-q)=1+q+q^2+\cdots$, the coefficient of $q^j$ in
$D_{m,k}(q)/(1-q)$ is $\sum_{n=0}^{\min(j,N)}d_n$.  For $j>N$, this value is
the constant total sum
$\sum_{n=0}^{N}d_n$, which is already the case $j=N$ in
\eqref{eq:cumulative-sums}.  Hence the infinite coefficientwise condition is
equivalent to the displayed finite list.
\end{proof}

Criterion \eqref{eq:cumulative-sums} is a systematic form of the pairing
strategy of \cite[Propositions 6.1 and 6.3]{HanXiong}.  In the relevant
numerator polynomials---their $H_k$ for $r=1/4$ and the analogous polynomial
for $r=1/3$---each negative term $-c_2q^{d_2}$ is matched with an earlier
positive term $c_1q^{d_1}$ satisfying $c_1\geq c_2>0$, so that
$(c_1q^{d_1}-c_2q^{d_2})/(1-q)\geqcoef 0$.  To see the equivalence with
partial sums, first collect like powers and split each nonzero coefficient
into signed unit monomials.  A greedy scan from low to high degree can then
pair every negative unit with an unmatched positive unit at a smaller degree
exactly when every running sum is nonnegative.  The criterion is thus
checked by one pass over the coefficients rather than by a search for a
pairing.

\section{Proofs of the Main Results}\label{sec:proofs}

We now prove the results stated in Section~\ref{sec:intro}, in order:
Theorem~\ref{thm:support-dominance} and Corollary~\ref{cor:half-line}
(Section~\ref{subsec:proof-dominance}),
Theorem~\ref{thm:canonical-reduction}
(Section~\ref{subsec:proof-reduction}), and
Theorem~\ref{thm:finite-verification}
(Section~\ref{subsec:proof-verification}).  We close by recording the
remaining open problem.

\subsection{Proof of the support-dominance theorem}
\label{subsec:proof-dominance}

The proof compares the two trace formulas supplied by
Proposition~\ref{prop:trace-formula} summand by summand.  Recall the setting:
$u=c/d$ and $r=a/b$ are reduced, $d$ is even, $d\mid\lcm(b,2)$, and $u\leq r$.
The first hypothesis makes every surviving summand of $\T_{u,k}$ positive; the
second embeds the positive summand-index set of $\T_{r,k}$ into the
summand-index set of $\T_{u,k}$; the third places each matched summand of
$\T_{u,k}$ at an exponent no larger than that of its partner.
Lemma~\ref{lem:shift-dominance} converts those exponent inequalities into
coefficientwise dominance, and the summands left unmatched contribute with a
favorable sign.

\begin{proof}[Proof of Theorem \ref{thm:support-dominance}]
Write $u=c/d$ and $r=a/b$ in lowest terms, put $L=\lcm(b,2)$, and abbreviate
the three relevant index sets by
\begin{align*}
  U&:=\{s: 0\leq s\leq k,\ d\mid s\},\\
  R_{+}&:=\{s: 0\leq s\leq k,\ L\mid s\},\\
  R_{-}&:=\{s: 0\leq s\leq k,\ b\mid s,\ s\text{ odd}\},
\end{align*}
so that $U$ is the summand-index set of $\T_{u,k}$, while $R_{+}$ and
$R_{-}$ are, respectively, the positive and negative summand-index sets
\eqref{eq:positive-index-set} and \eqref{eq:negative-index-set} for $\T_{r,k}$.
Since $d$
is even, every element of $U$ is even, so by
\eqref{eq:positive-index-set} and \eqref{eq:negative-index-set} every surviving
sign is positive.  Proposition~\ref{prop:trace-formula} therefore gives
\begin{equation}\label{eq:u-positive-expansion}
  \T_{u,k}(q)
  =\sum_{s\in U}
      q^{E_u(s)}\Phi_{k,s}(q).
\end{equation}
On the other hand, separating the positive and negative summand-index sets for
$\T_{r,k}$,
\begin{equation}\label{eq:r-sign-expansion}
  \T_{r,k}(q)
  =\sum_{s\in R_{+}}
      q^{E_r(s)}\Phi_{k,s}(q)
   -\sum_{s\in R_{-}}
      q^{E_r(s)}\Phi_{k,s}(q).
\end{equation}
Because $d\mid L$, any $s$ divisible by $L$ is divisible by $d$; that is,
\[
  R_{+}\subseteq U .
\]
Splitting the sum \eqref{eq:u-positive-expansion} as
$U=R_{+}\sqcup(U\setminus R_{+})$ and subtracting
\eqref{eq:r-sign-expansion} gives the exact identity
\begin{align}
  \T_{u,k}(q)-\T_{r,k}(q)
  ={}&\sum_{s\in R_{+}}
       \bigl(q^{E_u(s)}-q^{E_r(s)}\bigr)\Phi_{k,s}(q)
       \label{eq:dominance-decomposition}\\
   &+\sum_{s\in U\setminus R_{+}}
       q^{E_u(s)}\Phi_{k,s}(q)
    +\sum_{s\in R_{-}}
       q^{E_r(s)}\Phi_{k,s}(q).\notag
\end{align}
The last two sums have nonnegative coefficients; note that the terms of
$R_{-}$, which enter $\T_{r,k}$ negatively, therefore enter the
\emph{difference} positively, and so help rather than hinder.  For an index
$s\in R_{+}$, both exponents $E_u(s)$ and $E_r(s)$
are nonnegative integers (since $d\mid L\mid s$ and $b\mid L\mid s$), and the
hypothesis $u\leq r$ gives
\[
  E_u(s)=\binom{s+1}{2}+us
  \leq \binom{s+1}{2}+rs=E_r(s).
\]
Lemma~\ref{lem:shift-dominance} shows that every summand in the first sum of
\eqref{eq:dominance-decomposition} also has nonnegative coefficients.  Hence
$\T_{u,k}(q)-\T_{r,k}(q)\in\Z_{\geq 0}[[q]]$, as required.
\end{proof}

Each hypothesis in \eqref{eq:dominance-hypotheses} was used at exactly one
point of the argument.  The divisibility condition cannot
simply be dropped: the pair $u=1/4$, $r=1/2$ satisfies the other two
hypotheses but not $d\mid\lcm(b,2)$, and Example~\ref{ex:k4} shows that
$\T_{\frac{1}{4},4}(q)\not\geqcoef\T_{\frac{1}{2},4}(q)$.

\begin{proof}[Proof of Corollary \ref{cor:half-line}]
Apply Theorem~\ref{thm:support-dominance} with $u=1/2$.  Its denominator is
$d=2$, which is even and divides $\lcm(b,2)$ for every positive integer $b$.
The remaining hypothesis is precisely $1/2\leq r$.
\end{proof}

Theorem~\ref{thm:support-dominance} also propagates any established
even-denominator case to a larger set of parameters.

\begin{corollary}[Propagation]\label{cor:propagation}
Let $d$ be even.  Suppose, for a fixed $k$, that
\[
  \T_{\frac{1}{2},k}(q)\geqcoef \T_{c/d,k}(q)
\]
for some reduced positive rational $c/d$.  Then
\[
  \T_{\frac{1}{2},k}(q)\geqcoef \T_{a/b,k}(q)
\]
for every reduced $a/b\geq c/d$ satisfying $d\mid\lcm(b,2)$.
\end{corollary}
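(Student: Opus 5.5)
The Propagation corollary should follow from transitivity of $\geqcoef$ combined with Theorem~\ref{thm:support-dominance}. The plan is to chain the hypothesis $\T_{\frac12,k}\geqcoef\T_{c/d,k}$ with the comparison $\T_{c/d,k}\geqcoef\T_{a/b,k}$ supplied by the theorem.

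First I record that $\geqcoef$ is transitive on $\Q[[q]]$. If $F\geqcoef G$ and $G\geqcoef H$, then
\[
  F-H=(F-G)+(G-H),
\]
and the sum of two series with nonnegative coefficients again has nonnegative coefficients. Since $\coeff{n}{}$ is linear, this holds coefficient by coefficient.

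Next I apply Theorem~\ref{thm:support-dominance} with $u=c/d$ and $r=a/b$, both in lowest terms. I check the three hypotheses in \eqref{eq:dominance-hypotheses}:
\begin{itemize}[label={}]
  \item $d$ is even by assumption;
  \item $d\mid\lcm(b,2)$ by assumption;
  \item $u=c/d\leq a/b=r$ by assumption.
\end{itemize}
The theorem therefore gives $\T_{c/d,k}(q)\geqcoef\T_{a/b,k}(q)$ for the fixed $k\geq 1$.

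Combining this with the hypothesis $\T_{\frac12,k}(q)\geqcoef\T_{c/d,k}(q)$ and using transitivity yields $\T_{\frac12,k}(q)\geqcoef\T_{a/b,k}(q)$, as claimed.

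There is no real obstacle here. The only point needing care is that $k\geq 1$ is fixed throughout, which matches the range in which Theorem~\ref{thm:support-dominance} holds, and that both fractions are taken in lowest terms, as the theorem requires.
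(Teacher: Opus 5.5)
Your proposal is correct and matches the paper's proof: apply Theorem~\ref{thm:support-dominance} with $u=c/d$ and $r=a/b$ to get $\T_{c/d,k}\geqcoef\T_{a/b,k}$, then chain it with the hypothesis using transitivity of $\geqcoef$. Your explicit checks of the three hypotheses, of transitivity, and of $k\geq 1$ spell out what the paper leaves implicit.
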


\begin{proof}
Theorem~\ref{thm:support-dominance} gives
$\T_{c/d,k}\geqcoef\T_{a/b,k}$, and coefficientwise order is transitive.
\end{proof}

\subsection{Proof of the canonical reduction}
\label{subsec:proof-reduction}

The even core \eqref{eq:even-core} was designed so that
Theorem~\ref{thm:support-dominance} applies to the pair $(r^{\flat},r)$ with
nothing left to check: its denominator is even by construction, and it is at
most $r$ because its numerator is $1$ and its denominator is a multiple of
that of $r$.  That gives
\eqref{eq:core-dominates} at once, and the equivalence with the unit-family
\eqref{eq:unit-family} follows by composing it with the conjectured
inequality at $r=1/2$.  The finite range \eqref{eq:finite-unit-family} is
then a matter of identifying the two traces being compared when $2m$ exceeds
$k$, at which point only the summand $s=0$ survives on the right.

\begin{proof}[Proof of Theorem \ref{thm:canonical-reduction}]
Let $r=a/b$ be in lowest terms and put $L=\lcm(b,2)$.  The rational number
$r^{\flat}=1/L$ is reduced and has even denominator $L$.  Moreover,
\[
  r^{\flat}\leq r.
\]
Indeed, if $b$ is even, then $r^{\flat}=1/b\leq a/b$; if $b$ is odd, then
$r^{\flat}=\frac{1}{2b}\leq a/b$.  Theorem~\ref{thm:support-dominance}, with
$u=r^{\flat}$ and $d=L$, now gives \eqref{eq:core-dominates}.

If Conjecture~\ref{conj:HX} holds, then \eqref{eq:unit-family} follows by
specialization.  Conversely, suppose \eqref{eq:unit-family} holds.  Since $L$
is even, write $L=2m$.  Then
\[
  \T_{\frac{1}{2},k}(q)
  \geqcoef \T_{\frac{1}{2m},k}(q)
  =\T_{r^{\flat},k}(q)
  \geqcoef \T_{r,k}(q),
\]
where the last inequality is \eqref{eq:core-dominates}.  This proves the
equivalence.

It remains to justify the finite range \eqref{eq:finite-unit-family}.  If
$2m>k$, then the only multiple of $2m$ in $\{0,1,\ldots,k\}$ is zero, so
Proposition~\ref{prop:trace-formula} gives
\begin{equation}\label{eq:large-denominator-baseline}
  \T_{\frac{1}{2m},k}(q)=\Phi_{k,0}(q)=\frac{1}{(q;q)_k}.
\end{equation}
For $r=1/2$, the same proposition gives
\begin{equation}\label{eq:half-explicit}
  \T_{\frac{1}{2},k}(q)
  =\sum_{j=0}^{\lfloor k/2\rfloor}
      q^{2j(j+1)}\Phi_{k,2j}(q)
  \geqcoef \Phi_{k,0}(q),
\end{equation}
where the equality is \cite[eq.\ (19)]{HanXiong} and the dominance holds
because every summand has nonnegative coefficients.
Thus, every case with $2m>k$ is automatic, and the case $m=1$ of
\eqref{eq:unit-family} is an equality.  Only
the range \eqref{eq:finite-unit-family} can be nontrivial.
\end{proof}

As a first consequence, the reduction gives a second proof of
\cite[Proposition 4.1]{HanXiong}, which Han and Xiong obtained by direct
computation with $k=1,2,3$.

\begin{corollary}\label{cor:small-k}
Conjecture~\ref{conj:HX} holds for every positive rational $r$ when
$k=1,2,3$.
\end{corollary}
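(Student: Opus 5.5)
The plan is to apply Theorem~\ref{thm:canonical-reduction} and then check the few remaining primitive parameters directly. By the canonical reduction, Conjecture~\ref{conj:HX} for a fixed $k$ is equivalent to the inequalities \eqref{eq:unit-family}, and only the range \eqref{eq:finite-unit-family}, $2\leq m\leq\lfloor k/2\rfloor$, can be nontrivial. For $k=1,2,3$ we have $\lfloor k/2\rfloor\leq 1$, so this range is empty. The case $m=1$ is an equality. Every $m$ with $2m>k$ is handled by \eqref{eq:large-denominator-baseline} and \eqref{eq:half-explicit}, which give $\T_{\frac12,k}\geqcoef\Phi_{k,0}=\T_{\frac{1}{2m},k}$. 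Hence \eqref{eq:unit-family} holds for every $m\geq1$ when $k\leq3$.

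To spell out the reduction for a given positive rational $r=a/b$, I would write $L=\lcm(b,2)=2m$. Inequality \eqref{eq:core-dominates} gives $\T_{r^\flat,k}\geqcoef\T_{r,k}$ with $r^\flat=\frac{1}{2m}$. The previous paragraph gives $\T_{\frac12,k}\geqcoef\T_{\frac{1}{2m},k}$. Transitivity of $\geqcoef$ then finishes the argument. For $k\leq 3$ this yields the following two cases:
\begin{itemize}
\item if $L=2$, then $r^\flat=1/2$ and the first comparison is trivial;
\item if $L\geq4>k$, then $\T_{r^\flat,k}=1/(q;q)_k$, which is dominated by $\T_{\frac12,k}$ by \eqref{eq:half-explicit}.
\end{itemize}

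I expect no real obstacle. The only point needing care is checking that $k\leq 3$ makes $\lfloor k/2\rfloor\leq 1$, which is clear since $\lfloor 3/2\rfloor=1$. This confirms that no primitive parameter $\frac{1}{2m}$ with $m\geq2$ survives, so no computation is needed.
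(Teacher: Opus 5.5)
Your proposal is correct and matches the paper's proof: Theorem~\ref{thm:canonical-reduction} leaves only the range $2\leq m\leq\lfloor k/2\rfloor$, which is empty for $k\leq 3$. The case $m=1$ is an equality, and every $m$ with $2m>k$ is handled by \eqref{eq:large-denominator-baseline} and \eqref{eq:half-explicit}; your unpacking via $L=\lcm(b,2)$ and transitivity only makes explicit what the paper's one-line appeal to the reduction leaves implicit.
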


\begin{proof}
For $k\leq 3$, there is no integer $m$ satisfying
$2\leq m\leq\lfloor k/2\rfloor$.  Theorem~\ref{thm:canonical-reduction}
therefore leaves no nontrivial primitive case.
\end{proof}

For $k=4$ and $k=5$, the reduction leaves only $r=1/4$; the $k=4$
instance is worked out in Example~\ref{ex:k4}.  For $k=6$ and $k=7$, it
leaves $r=1/4$ and $r=1/6$.  This illustrates why the even-core family is the
natural one for further work.

\subsection{Proof of the finite verification}
\label{subsec:proof-verification}

By Lemma~\ref{lem:cumulative}, the proof of
Theorem~\ref{thm:finite-verification} reduces to checking the finite list
of cumulative inequalities
\eqref{eq:cumulative-sums} for every pair $(k,m)$ in the range
\eqref{eq:finite-unit-family}.  This is the analogue, for the whole
primitive family, with the pairing replaced by
\eqref{eq:cumulative-sums}, of the verification carried out for $r=1/4$ and
$r=1/3$ in \cite[Section~6]{HanXiong}, whose code and data are posted at
\url{https://irma.math.unistra.fr/~guoniu/halfconj.html}.  The verification
program constructs all Gaussian polynomials by the recurrence
\begin{equation}\label{eq:qbinomial-recurrence}
  \qbinom{k}{s}
  =\qbinom{k-1}{s}
   +q^{k-s}\qbinom{k-1}{s-1},
\end{equation}
with the usual boundary values.  At each stage it stores every Gaussian
polynomial as its complete integer coefficient vector, whose degree is
$s(k-s)$.  It then constructs $A_{1,k}$ and $A_{m,k}$ from
\eqref{eq:A-m-k}, forms the complete polynomial $D_{m,k}$, and checks every
cumulative sum in \eqref{eq:cumulative-sums}.  The number of nontrivial pairs
is
\[
  \sum_{k=1}^{200}
  \max\!\left(\left\lfloor\frac{k}{2}\right\rfloor-1,0\right)
  =\sum_{j=1}^{99}j+\sum_{j=1}^{98}j
  =9{,}801.
\]

Running the program on all of these pairs produced no violation of
\eqref{eq:cumulative-sums}.  We record the outcome as a proposition, since it
is the one input to Theorem~\ref{thm:finite-verification} that is not proved
by hand.

\begin{proposition}[Output of the exact computation]\label{prop:computer-output}
For every pair $(k,m)$ satisfying
\[
  1\leq k\leq 200,
  \qquad
  2\leq m\leq\left\lfloor\frac{k}{2}\right\rfloor,
\]
the series $D_{m,k}(q)/(1-q)$ has nonnegative integer coefficients.
\end{proposition}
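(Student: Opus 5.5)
This proposition is a finite, exact computation, and I would prove it by carrying out that computation with a certified program. The proof has three ingredients. The first is exact construction of the polynomials $D_{m,k}$. The second is Lemma~\ref{lem:cumulative}, which turns the infinite coefficientwise statement into the finite list of partial-sum inequalities \eqref{eq:cumulative-sums}. The third is an exhaustive check over the $9{,}801$ pairs $(k,m)$.

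\textbf{Building the Gaussian polynomials.} For each $k\leq 200$, I would compute all $\qbinom{k}{s}$ as exact integer coefficient vectors via the recurrence \eqref{eq:qbinomial-recurrence}. Arbitrary-precision integers are needed because the central coefficients grow like $2^{k}$ divided by a polynomial factor. Two sanity checks guard this step: the length of each vector should be $s(k-s)+1$, and the coefficients should sum to $\binom{k}{s}$.

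\textbf{Forming $D_{m,k}$ and checking partial sums.} Next I would form $A_{1,k}$ and $A_{m,k}$ from \eqref{eq:A-m-k}. This means shifting the vector of $\qbinom{k}{2m\ell}$ by the exponent $2m^2\ell^2+(m+1)\ell$ and adding. The exponents are at most about $k^2/2$, so all vectors have length $O(k^2)$. Then I would take $D_{m,k}=A_{1,k}-A_{m,k}$, with the $\ell=0$ terms cancelling, and compute the running sums $\sum_{n\leq j}d_n$ for $0\leq j\leq\deg D_{m,k}$. The check passes if every running sum is nonnegative. By the second half of Lemma~\ref{lem:cumulative}, this is exactly the claim that $D_{m,k}(q)/(1-q)\in\Z_{\geq 0}[[q]]$, since coefficients beyond the degree equal the total sum. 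Integrality of the quotient is automatic.

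\textbf{Main obstacle.} The difficulty is not mathematical but lies in trusting the output. I would address this in two ways.
\begin{itemize}
\item Cross-validation against known data: reproduce the $k=4$ numerator $q^4\qbinom{4}{2}+q^{12}-q^{11}$ of Example~\ref{ex:k4}, and reproduce Han and Xiong's published $H_k=D_{2,k}$ for $k\leq 150$.
\item Independent recomputation: evaluate $D_{m,k}(1)$ as $\sum_{\ell\geq 1}\bigl(\binom{k}{2\ell}-\binom{k}{2m\ell}\bigr)$, which checks the final partial sum. If feasible, recompute a random subset of pairs with an independent implementation, for example a product formula for $\qbinom{k}{s}$ with exact polynomial division.
\end{itemize}
Cost is modest: roughly $10^4$ pairs, each with vectors of length about $2\cdot 10^4$ and big-integer entries. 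Once no violation is found, the proposition follows as the recorded output.
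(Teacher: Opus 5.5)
Your plan is correct and is essentially the paper's argument. Both build every $\qbinom{k}{s}$ with exact integers from the recurrence \eqref{eq:qbinomial-recurrence}, assemble $D_{m,k}=A_{1,k}-A_{m,k}$ from \eqref{eq:A-m-k}, and scan the running sums \eqref{eq:cumulative-sums} for all $9{,}801$ pairs, with the second half of Lemma~\ref{lem:cumulative} converting that finite check into the coefficientwise statement. Your extra safeguards (vector lengths $s(k-s)+1$, coefficient sums $\binom{k}{s}$, reproducing $D_{2,4}$ and Han--Xiong's $H_k$) go beyond what the paper reports but do not change the proof.
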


\begin{proof}
The computation described above was run for all $9{,}801$ admissible pairs.
For each pair, the program scanned the complete coefficient vector of
$D_{m,k}$ from lowest to highest degree and verified that every running sum
in \eqref{eq:cumulative-sums} is nonnegative.  No negative running sum
occurred.  Since the recurrence \eqref{eq:qbinomial-recurrence} and the
subsequent scans use only exact integer arithmetic, this establishes the
stated finite result.
\end{proof}

\begin{proof}[Proof of Theorem \ref{thm:finite-verification}]
Fix $1\leq k\leq 200$.  Proposition~\ref{prop:computer-output} and
Lemma~\ref{lem:cumulative} establish \eqref{eq:unit-family} for every
nontrivial $m$ in the finite range \eqref{eq:finite-unit-family}.
Theorem~\ref{thm:canonical-reduction} then gives the desired inequality for
every positive rational $r$.
\end{proof}

\subsection{The remaining problem}\label{subsec:remaining}

By Theorem~\ref{thm:canonical-reduction} and the trace formula
\eqref{eq:trace-formula-j}, what remains of Conjecture~\ref{conj:HX} is the
uniform coefficientwise inequality
\begin{equation}\label{eq:remaining-open-family}
 \sum_{j=0}^{\lfloor k/2\rfloor}
   \frac{q^{2j(j+1)}}{(q;q)_{2j}(q;q)_{k-2j}}
 \geqcoef
 \sum_{\ell=0}^{\lfloor k/(2m)\rfloor}
   \frac{q^{2m^2\ell^2+(m+1)\ell}}
        {(q;q)_{2m\ell}(q;q)_{k-2m\ell}}
\end{equation}
for all $m\geq 2$ and $k\geq 1$. 
 By Theorem~\ref{thm:finite-verification},
\eqref{eq:remaining-open-family} holds for every $m\geq 2$ when $k\leq 200$;
what is at issue is uniformity in $k$.  Neither the support-dominance theorem
nor the finite computation supplies an all $k$ proof of
\eqref{eq:remaining-open-family}.
 In particular, a uniform all $k$ proof
for $m=2$, corresponding to $r=1/4$, would already settle the first unresolved
primitive family.  Han and Xiong single out precisely this case as
\cite[Conjecture 2.3]{HanXiong}, on the grounds of the proximity of $1/4$ to
$1/2$. Theorem~\ref{thm:canonical-reduction} gives a second reason for its
prominence: it is the first nontrivial primitive parameter in the canonical
family, equivalently the largest primitive parameter below $1/2$.

\appendix
\section{AxiomProver's autonomous Lean formalization}\label{sec:AI}

AxiomProver is an AI system for mathematical research via formal proof that is
currently under development.  As an early test case, we treated
Theorem~\ref{thm:support-dominance}, Corollary~\ref{cor:half-line}, and
Theorem~\ref{thm:canonical-reduction} as an end-to-end formalization target:
the formal statements and their proofs were produced by AxiomProver from a
natural-language statement of the problem that contained no proofs, and were
then verified by the Lean proof assistant.  Using that formal development as a
reference, the human author wrote the exposition in the main text for a
mathematical audience.  Readers not interested in automated theorem proving may
skip this appendix.

\subsection*{Scope of the formalization}
The formalization concerns three of the main results of this paper, stated in
Lean for the Gaussian binomial $\qbinom{k}{s}$ defined by the $q$-Pascal
recursion, the partition counts $p_m(n)$, the exponent $E_r(s)$ of
\eqref{eq:numerator-expansion}, the integer support of $\T_{r,k}$, and the
trace $\T_{r,k}$ itself, taken there in the convolution form given by
Proposition~\ref{prop:trace-formula}.  The formalized statements are
Theorem~\ref{thm:support-dominance}, Corollary~\ref{cor:half-line}, and
Theorem~\ref{thm:canonical-reduction}.
Theorem~\ref{thm:finite-verification} was not part of the Lean
formalization.

\subsection*{Process}
The formal proofs were developed and verified using Lean~4.31
together with Mathlib and AxiomLib, an internal Lean library at Axiom Math
which supplies the $q$-Pochhammer symbol and which was uploaded to the system
along with the inputs listed below; compatibility with earlier or later
versions is not guaranteed, owing to the evolving nature of the Lean~4 compiler
and its core libraries.  The proofs contain no \texttt{sorry}
declarations and introduce no additional axioms.  The human author checked
that the statements in \texttt{problem.lean} express
Theorem~\ref{thm:support-dominance}, Corollary~\ref{cor:half-line}, and
Theorem~\ref{thm:canonical-reduction} as stated in the main text.  All files
needed to inspect and rerun the formalization are posted in the repository
\begin{center}
  \url{https://github.com/AxiomMath/QBinomialTrace}
\end{center}
The input supplied to AxiomProver consisted of
\begin{itemize}
\item \texttt{main.tex}, a self-contained natural-language statement of
  the definitions and of the three target results;
\item \texttt{HanGuoNiu.pdf}, a copy of \cite{HanXiong}; and
\item \texttt{task.md}, which instructs the system to read the two
  preceding files and to formalize and verify the three named results.
\end{itemize}
From these inputs, AxiomProver autonomously produced
\begin{itemize}
\item \texttt{problem.lean}, a Lean formalization of the problem statement; and
\item \texttt{solution.lean}, a complete Lean formalization of the proof.
\end{itemize}

After AxiomProver generated the solution, the human author wrote this paper for
human readers.  A research paper is a narrative designed to communicate ideas to
people, whereas a Lean file is written to satisfy a proof-checking kernel.
At first glance, therefore, the formal proofs do not resemble the narrative
presented here.

\section*{Declaration of generative AI and AI-assisted technologies in the
manuscript preparation process}\noindent
As described in the preceding appendix, AxiomProver was used to produce and
formally verify, in Lean, the proofs of
Theorem~\ref{thm:support-dominance}, Corollary~\ref{cor:half-line}, and
Theorem~\ref{thm:canonical-reduction}.  The exposition was subsequently written
by the human author.


\begin{thebibliography}{99}

\bibitem{Andrews}
G.~E. Andrews,
\emph{The Theory of Partitions},
Cambridge Mathematical Library, Cambridge University Press, Cambridge, 1998.

\bibitem{GasperRahman}
G.~Gasper and M.~Rahman,
\emph{Basic Hypergeometric Series},
Encyclopedia of Mathematics and its Applications, vol.~35,
Cambridge University Press, Cambridge, 1990.

\bibitem{HanXiong}
G.-N. Han and H.~Xiong,
The $1/2$-conjecture for $q$-binomial coefficients with fractional index,
\emph{arXiv preprint} arXiv:2606.01919v1 (2026).

\bibitem{Stanley}
R.~P. Stanley,
\emph{Enumerative Combinatorics, Vol.~1}, second ed.,
Cambridge Studies in Advanced Mathematics, vol.~49,
Cambridge University Press, Cambridge, 2012.

\end{thebibliography}
\end{document}